\newcommand{\Gr}[2]{\psfig{file=#1.pdf,width=#2}}
\DeclareSymbolFont{bchoperators}{T1}{bch}{m}{n}
\renewcommand{\operator@font}{\mathgroup\symbchoperators}
\titleformat{\section}{\normalfont\bfseries\filcenter}{\thesection}{1em}{}
\titleformat{\subsection}{\normalfont\bfseries}{\thesubsection}{1em}{}
\titleformat{\subsubsection}{\normalfont\bfseries}{\thesubsubsection}{1em}{}
\newcommand{\R}{{\mathbb R}}
\newcommand{\fg}{{\mathfrak g}}
\newtheorem{Theorem}{Theorem}%[section]
\newtheorem{Lemma}[Theorem]{Lemma}
\newtheorem{Corollary}[Theorem]{Corollary}
\definecolor{darkgreen}{rgb}{0,0.5,0}
\begin{document}

\title[Coarse length in 3-step nilpotent Lie groups]%
      {Coarse length can be unbounded \\ in 3-step nilpotent Lie groups}

\author{Michael Stoll}
\address{Mathematisches Institut,
         Universität Bayreuth,
         95440 Bayreuth, Germany.}
\email{Michael.Stoll@uni-bayreuth.de}

\date{June 13, 2010/July 14, 2025}

\keywords{Nilpotent Lie group, geodesic, coarse length}
\subjclass[2020]{20F05, 20F18, 20F65, 22E25}

\begin{abstract}
  In~\cite{Stoll1998b}, we remarked that, contrary to 2-step nilpotent
  simply connected Lie groups, in 3-step nilpotent simply connected
  Lie groups it is possible that `$\R$-words' in the given generators
  cannot be replaced by an equally long $\R$-word representing the same
  group element and having a bounded number of direction changes.
  In this note, we present an example for this phenomenon.
\end{abstract}

\maketitle

%=============================================================================

\section{Introduction}

Let $G$ be a (connected and) simply connected nilpotent Lie group
with Lie algebra~$\fg$. Via the exponential map, we can identify $\fg$
with~$G$. We fix a set $S = \{x_1, \dots, x_n\}$ of generators of~$\fg$.
Let $S_{\R} = \{x^t : x \in S, t \in \R\}$.
We consider words of the form
\[ w = x_{i_1}^{t_1} x_{i_2}^{t_2} \cdots x_{i_k}^{i_k} \in S_{\R}^* \,, \]
i.e., words over the alphabet~$S_{\R}$. We will call such words
\emph{$\R$-words over~$S$}.
We define the \emph{length} of~$w$ to be
\[ \ell(w) = \sum_{j=1}^k |t_j| \]
and the \emph{coarse length} of~$w$ to be
\[ \ell'(w) = k \,. \]
We also let $|w|$ denote the result of evaluating $w$ in~$G$. In this
context, we set
$|x^t| = t x$ for $t \in \R$ and $x \in G = \fg$, where the scalar multiplication
with~$t$ is taken in the Lie algebra.
Then we can define the \emph{length} and \emph{coarse length} of
an element $g$ of~$G$ by
\[ \ell(g) = \inf\,\bigl\{\ell(w) : w \in S_{\R}^*, |w| = g\bigr\} \in \R_{\ge 0} \]
and
\[ \ell'(g) = \inf\,\bigl\{\ell'(w) : w \in S_{\R}^*, |w| = g, \ell(w) = \ell(g)\bigr\}
            \in \R_{\ge 0} \cup \{\infty\} \,.
\]

In our paper~\cite{Stoll1998b}*{Lemma~3.3}, we show that in the
case that $G$ is 2-step nilpotent, there is a number~$N$ such that
for every $\R$-word $w \in S_{\R}^*$, there is another $\R$-word $w' \in S_{\R}^*$
satisfying $|w'| = |w|$, $\ell(w') \le \ell(w)$, and $\ell'(w') \le N$.
This implies that $\ell'(g) \le N$ for all $g \in G$.

We then remark that this assertion is not true in general for 3-step
nilpotent simply connected Lie groups. Since various people have asked
for more precise information, we give an explicit example in this note.

%=============================================================================

\section{The Example}

Consider the free 3-step nilpotent Lie algebra $\fg$ on two generators
$X$ and~$Y$. Then
\[ X, \; Y, \; \tfrac{1}{2} [X,Y], \; \tfrac{1}{12} [X,[X,Y]], \; \tfrac{1}{12} [Y,[Y,X]] \]
is a basis of~$\fg$, which we use to identify $\fg$ with~$\R^5$.
We can also identify $\fg$ with the corresponding Lie group~$G$
(i.e., we take the exponential and logarithm maps to be the identity on
the underlying set); then the group law in~$G$ is given by
\[ a \cdot b = a + b + \tfrac{1}{2} [a,b] + \tfrac{1}{12} [a,[a,b]] + \tfrac{1}{12} [b,[b,a]] \]
in terms of the Lie bracket on~$\fg$.

Now we consider elements of~$G$ that can be written in the form
\[ X^{s_1} Y^{t_1} X^{s_2} Y^{t_2} \cdots X^{s_N} Y^{t_N} \]
with $s_i$, $t_j$ nonnegative and $\sum_i s_i = \sum_j t_j = 1$. In terms of the
basis above, the result will be of the form $(1, 1, u, v, w)$. Let $M$ be
the subset of $\R^3$ consisting of all triples $(u, v, w)$ that can be
obtained in this way.

\begin{Lemma}
  The point $(0, 0, 0)$ is in the closure of~$M$, but not in~$M$ itself.
\end{Lemma}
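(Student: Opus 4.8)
The plan is to translate the algebraic question into one about monotone plane paths and their low-order iterated integrals, and then to collapse the three equations $u=v=w=0$ into a single integral identity that holds only for the straight diagonal.

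First I would identify the three coordinates. The word $X^{s_1}Y^{t_1}\cdots X^{s_N}Y^{t_N}$ traces a monotone staircase from $(0,0)$ to $(1,1)$ in the $(X,Y)$-plane, stepping right by $s_i$ and up by $t_j$; write $x(\cdot),y(\cdot)$ for its coordinates. Since $\fg$ is $3$-step nilpotent, the Baker--Campbell--Hausdorff product equals the logarithm of the product of exponentials, and its coordinates are the iterated integrals of the path up to order three. Comparing the degree-$2$ and degree-$3$ parts against the basis $\tfrac12[X,Y],\ \tfrac1{12}[X,[X,Y]],\ \tfrac1{12}[Y,[Y,X]]$, I would obtain
$$u=2\!\int x\,dy-1,\qquad v=6\!\int x^2\,dy-6\!\int x\,dy+1,\qquad w=6\!\int y^2\,dx-6\!\int y\,dx+1,$$
where $\int x^m\,dy=\sum_j S_j^{\,m}t_j$ and $\int y^m\,dx=\sum_i T_{i-1}^{\,m}s_i$ in terms of the partial sums $S_j=\sum_{i\le j}s_i$, $T_j=\sum_{k\le j}t_k$. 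I would check the constants on the cases $X\cdot Y$ and $Y\cdot X$, where the group law can be read off directly and gives $(u,v,w)=(1,1,1)$ and $(-1,1,1)$, matching the formulas above.

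For the closure I would take the uniform staircase $s_i=t_j=1/N$. Elementary sums give $\int x\,dy=\tfrac{N+1}{2N}$ and $\int x^2\,dy=\tfrac{(N+1)(2N+1)}{6N^2}$, and symmetrically for the $y$-integrals, so that $(u,v,w)=(1/N,\,1/N^2,\,1/N^2)\to(0,0,0)$; hence $(0,0,0)\in\overline M$.

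The substance is $(0,0,0)\notin M$. Setting $u=v=w=0$ gives $\int x\,dy=\tfrac12$, $\int x^2\,dy=\tfrac13$, and $\int y^2\,dx=\tfrac13$. The key step is to integrate by parts along the path: since $\int d(xy^2)=(xy^2)\big|_{(0,0)}^{(1,1)}=1$ and $d(xy^2)=y^2\,dx+2xy\,dy$, the third condition becomes $\int xy\,dy=\tfrac13$. Writing $X(y)$ for the $x$-coordinate at height $y$ (a nondecreasing step function) and $D=X(y)-y$, and using $\int_0^1 y\,dy=\tfrac12$, $\int_0^1 y^2\,dy=\tfrac13$, the three conditions read $\int D\,dy=0$, $\int D^2\,dy+2\int yD\,dy=0$, and $\int yD\,dy=0$; the last two force $\int_0^1 D^2\,dy=0$, so $X(y)=y$ for almost every $y$. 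A step function cannot agree with the identity on a full-measure set, a contradiction. The only place I expect to need care is pinning down the degree-$3$ BCH coefficients, since an error there would shift the target constant $\tfrac13$; the two sanity checks above are meant to rule this out, after which everything is forced by the integration-by-parts identity.
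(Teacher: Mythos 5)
Your proposal is correct, but for the crucial part of the lemma it takes a genuinely different route from the paper. The paper argues recursively: every word in $\Sigma$ arises from $XY$ or $YX$ by the maps $a_t,b_t$ (rescale all $X$-exponents and append $X^t$, resp.\ for $Y$), so $M$ is the smallest set containing $(\pm1,1,1)$ invariant under the induced affine maps on $(u,v,w)$; after projecting to $x=(v+3u+2)/6$, $y=(w-3u+2)/6$ these maps become triangular, and the paper describes the image $M'$ by explicit inequalities preserved by $a_t,b_t$ (a step it only sketches), concluding since $(0,0,0)$ projects to $(1/3,1/3)$, which violates them. You instead identify $(u,v,w)$ with low-order iterated integrals of the monotone staircase path; I verified your formulas $u = 2\int x\,dy - 1$, $v = 6\int x^2\,dy - 6\int x\,dy + 1$, $w = 6\int y^2\,dx - 6\int y\,dx + 1$ against the level-two and level-three coefficients of the log-signature (equivalently, one can prove them by induction on $N$ from the displayed group law), and they are right. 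Then $u=v=w=0$ becomes the moment conditions $\int_0^1 X(y)\,dy=\tfrac12$, $\int_0^1 X(y)^2\,dy=\tfrac13$, $\int_0^1 yX(y)\,dy=\tfrac13$ (the last via your integration by parts $\int y^2\,dx + 2\int xy\,dy = 1$, valid for piecewise-linear paths), forcing $\int_0^1 (X(y)-y)^2\,dy=0$; a step function cannot equal $y$ almost everywhere, so $(0,0,0)\notin M$. What each approach buys: the paper gets a complete description of $M'$ — strictly more information than the lemma needs, at the price of a verification left to the reader — while your $L^2$-rigidity argument is shorter, self-contained, and conceptually sharper: the unique monotone path with the correct second- and third-order integrals is the diagonal, which is a limit of staircases but never a staircase, which also explains the closure statement. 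Two small points: your two sanity checks $(u,v,w)=(1,1,1)$ and $(-1,1,1)$ alone do not pin down the coefficients of $v$ and $w$ (they leave a one-parameter family), so the actual derivation from the BCH/iterated-integral comparison must be written out (or a third check such as $X^{1/2}YX^{1/2}$, which gives $(0,-\tfrac12,1)$, added); and for the uniform staircase the $y$-integrals are $\tfrac{N-1}{2N}$ and $\tfrac{(N-1)(2N-1)}{6N^2}$ rather than exact mirrors of the $x$-integrals, though your conclusion $(u,v,w)=(1/N,1/N^2,1/N^2)\to(0,0,0)$ is correct and agrees with the paper's limit $(X^{1/n}Y^{1/n})^n \to X+Y$.
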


\begin{figure}[htb]
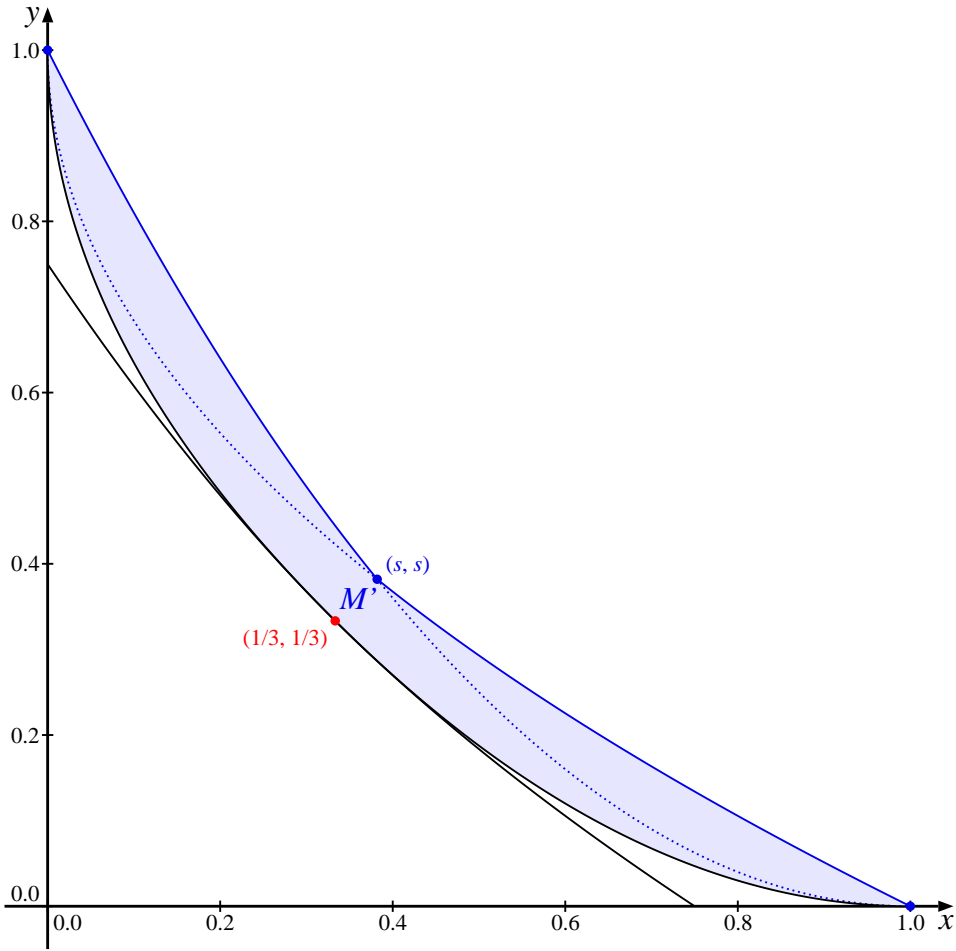

  \begin{center}
    \Gr{Mprime}{0.8\textwidth}
  \end{center}
  \caption{The set $M'$. Here $s = (3 - \sqrt{5})/2$.}
  \label{fig1}
\end{figure}

\begin{proof}
  Let $\Sigma$ be the set of $\R$-words
  \[ X^{s_1} Y^{t_1} X^{s_2} Y^{t_2} \cdots X^{s_N} Y^{t_N} \]
  with $s_i$, $t_j$ nonnegative and $\sum_i s_i = \sum_j t_j = 1$ as above,
  and let $\Sigma'$ be its image in the group~$G$ (which corresponds to
  $\{(1,1)\} \times M$).
  For $0 \le t \le 1$, define maps $a_t : \Sigma \to \Sigma$ and
  $b_t : \Sigma \to \Sigma$ by
  \[ a_t(w) = w(X^{1-t}, Y) X^t \qquad\text{and}\qquad b_t(w) = w(X, Y^{1-t}) Y^t \]
  (i.e., for $a_t$, we replace each $X^s$ in~$w$ by~$X^{s(1-t)}$ and multiply
  the result on the right by~$X^t$; similarly for~$b_t$).
  Since each element of~$\Sigma$ can be
  obtained by a succession of applications of these maps to one of the
  words~$X Y$ or $Y X$,
  $\Sigma'$ is the smallest subset of~$G$ containing $X Y$ and~$Y X$ that is
  invariant under these maps. Therefore $M$ is the smallest subset of $\R^3$
  containing $(\pm1, 1, 1)$ that is invariant under the maps induced by
  $a_t$ and~$b_t$ ($0 \le t \le 1$) on $\R^3$, which are
  \begin{align*}
    a_t(u,v,w) &= \bigl((1-t) u - t, \; (1-t)^2 v - 3t(1-t) u + t(2t-1), \; (1-t) w + t\bigr) \\
    b_t(u,v,w) &= \bigl((1-t) u + t, \; (1-t) v + t, \; (1-t)^2 w + 3t(1-t) u + t(2t-1)\bigr) \,.
  \end{align*}

  We now project to the new coordinates
  \[ x = \frac{v + 3u + 2}{6} \qquad\text{and}\qquad y = \frac{w - 3u + 2}{6} \,. \]
  In terms of $x$ and~$y$, we obtain (again abusing notation slightly)
  \begin{align*}
    a_t(x,y) &= \bigl((1-t)^2 x, \; (1-t) y + t\bigr) \\
    b_t(x,y) &= \bigl((1-t) x + t, \; (1-t)^2 y\bigr) \,.
  \end{align*}
  Let $M'$ be the image of~$M$ under this projection. Then $M'$ is the smallest subset 
  of~$\R^2$ containing $(1,0)$ and~$(0,1)$ that is invariant under $a_t$ and~$b_t$
  as above.
  It is then not hard to show that $M' \setminus \{(1,0), (0,1)\}$ is given by the 
  following inequalities:
  \begin{gather*}
     x < 1 \quad\text{and}\quad y < 1 \quad\text{and}\quad
     4x > 3(1-y)^2 \quad\text{and}\quad 4y > 3(1-x)^2  \\
     \text{and}\quad \bigl(x \le (1-y)^2 \quad\text{or}\quad y \le (1-x)^2\bigr) \,.
  \end{gather*}
  (One shows that the inequalities are satisfied for $a_t(1,0)$ when~$t < 1$, 
  similarly for $b_t(0,1)$ when~$t < 1$, and that they are preserved under~$a_t$
  and~$b_t$ when~$t < 1$. This shows the inclusion relevant for us. The other one 
  follows from the fact that all points on the diagonal $x=y$ that satisfy the 
  inequalities can be reached; all other points in the domain can be reached 
  from there.) See Figure~\ref{fig1}, where $s = (3 - \sqrt{5})/2$.

  In particular, $(1/3,1/3)$ is not in~$M'$, so $(0,0,0)$ is not in~$M$.

  On the other hand,
  \[ X + Y = \lim_{n \to \infty} (X^{1/n} Y^{1/n})^n \,, \]
  so $(0,0,0)$ is in the closure of~$M$. In other words, $X + Y$ is in the closure 
  of~$\Sigma'$, but not in~$\Sigma'$.
\end{proof}

This
implies that the coarse length is unbounded on the set of elements of~$G$ in
the form given above. (Since otherwise, $M$ would have to be compact,
compare~\cite{Stoll1998b}.) Note
that the length of a word as above cannot be shortened to be less than~2,
since otherwise we could not reach the projection~$(1, 1)$ in~$G/G'$
(where $G'$ is the derived subgroup of~$G$; the projection to~$G/G'$
is the projection to the first two coordinates).

\begin{Corollary}
  Let $g_0 \in G$ correspond to $(1,1,0,0,0)$ with respect to the basis above,
  i.e., $g_0 = X + Y$ as an element of~$\fg$.
  Then $\ell'(g_0) = \infty$. (I.e., there is no geodesic joining the origin
  to~$g_0$ consisting of finitely many steps in directions given by the
  generators.)
\end{Corollary}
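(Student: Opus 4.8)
The plan is to reduce everything to the single claim that $\ell(g_0)=2$ and then read off the corollary from the Lemma. Suppose first that $\ell(g_0)=2$ has been established, and let $w\in S_{\R}^*$ be any finite word with $|w|=g_0$ and $\ell(w)=\ell(g_0)=2$. Writing $w=z_1^{t_1}\cdots z_k^{t_k}$ with each $z_j\in\{X,Y\}$ and projecting to the abelianization $G/G'\cong\R^2$ (the first two coordinates), the image of $|w|$ is $\bigl(\sum_{z_j=X}t_j,\ \sum_{z_j=Y}t_j\bigr)=(1,1)$, so $\ell(w)=\sum_j|t_j|\ge\bigl|\sum_{z_j=X}t_j\bigr|+\bigl|\sum_{z_j=Y}t_j\bigr|=2$, with equality forcing all $X$-exponents and all $Y$-exponents to be nonnegative. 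Merging consecutive equal letters (which changes neither $|w|$ nor $\ell(w)$) and padding with zero exponents turns $w$ into a word $X^{s_1}Y^{t_1}\cdots X^{s_N}Y^{t_N}$ with $s_i,t_j\ge0$ and $\sum_is_i=\sum_jt_j=1$; that is, $w\in\Sigma$ and $|w|\in\{(1,1)\}\times M$. But $g_0$ corresponds to $(u,v,w)=(0,0,0)\notin M$ by the Lemma, a contradiction. Hence no such $w$ exists, and $\ell'(g_0)=\inf\emptyset=\infty$ in the convention of the definition.

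It remains to prove $\ell(g_0)=2$. The bound $\ell(g_0)\ge2$ is exactly the projection estimate above, now without the equality hypothesis. For the upper bound I would approximate and then correct. By the Lemma, $g_0=X+Y=\lim_{n\to\infty}|P_n|$ with $P_n=(X^{1/n}Y^{1/n})^n$, and each $P_n$ is a finite word with $\ell(P_n)=2$, so $\ell(|P_n|)\le2$. Set $\epsilon_n=|P_n|^{-1}\cdot g_0$; since $|P_n|\to g_0$ we have $\epsilon_n\to0$ in $G$, and since $|P_n|$ and $g_0$ have the same image $(1,1)$ in $G/G'$, each $\epsilon_n$ lies in the derived subgroup $G'$. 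Because $\ell$ is subadditive (concatenate words), $\ell(g_0)=\ell(|P_n|\cdot\epsilon_n)\le\ell(|P_n|)+\ell(\epsilon_n)\le2+\ell(\epsilon_n)$, so it suffices to find finite words $C_n$ with $|C_n|=\epsilon_n$ and word-length $\ell(C_n)\to0$ (then $\ell(\epsilon_n)\le\ell(C_n)\to0$).

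This last point is the main obstacle, and it is where the $3$-step structure enters. Writing $\epsilon_n=\alpha_n\,\tfrac12[X,Y]+\beta_n\,\tfrac1{12}[X,[X,Y]]+\gamma_n\,\tfrac1{12}[Y,[Y,X]]$ with $\alpha_n,\beta_n,\gamma_n\to0$, I would assemble $C_n$ from short commutator words: a displacement $\alpha$ in the $[X,Y]$-direction is produced by $X^aY^bX^{-a}Y^{-b}$ with $ab\sim\alpha$, at length $\sim4\sqrt{|\alpha|}$, while displacements $\beta,\gamma$ in the degree-$3$ directions are produced by triple commutators at length $\sim|\beta|^{1/3}$ and $\sim|\gamma|^{1/3}$. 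Concatenating these and cleaning up the cross terms (which are of strictly higher order in $\epsilon_n$ and can be absorbed by an iteration that converges geometrically) yields $|C_n|=\epsilon_n$ with $\ell(C_n)\to0$. Conceptually this is just the statement that $\ell$ is the Carnot--Carathéodory distance from the origin, hence continuous with $\ell(0)=0$, so $\ell(\epsilon_n)\to0$ as $\epsilon_n\to0$. Either way $\ell(g_0)\le2$, so $\ell(g_0)=2$, and by the first paragraph $\ell'(g_0)=\infty$.
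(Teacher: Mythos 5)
Your proof is correct, and its skeleton coincides with the paper's: the lower bound $\ell(g_0)\ge 2$ via the abelianization, the upper bound via the approximating words $(X^{1/n}Y^{1/n})^n$, and the final step showing that any length-2 word representing $g_0$ would force $(0,0,0)\in M$, contradicting the Lemma. (Your first paragraph usefully spells out what the paper compresses into ``by definition of~$\Sigma'$'': minimality of the length forces all exponents to be nonnegative, so the word can be normalized into~$\Sigma$.) The genuine difference is how the upper bound $\ell(g_0)\le 2$ is closed. The paper simply invokes the continuity of~$\ell$ on~$G$, together with $\ell \equiv 2$ on~$\Sigma'$ and $g_0 \in \overline{\Sigma'}$; you instead reduce matters to continuity of~$\ell$ at the identity via subadditivity (writing $g_0 = |P_n|\cdot \epsilon_n$ with $\epsilon_n \to 0$ in~$G'$) and then sketch a hands-on proof that $\ell(\epsilon_n)\to 0$, using commutator words of length $O\bigl(|\alpha_n|^{1/2}+|\beta_n|^{1/3}+|\gamma_n|^{1/3}\bigr)$. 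What this buys is self-containedness: the paper leans on a known fact about word/Carnot--Carath\'eodory metrics on nilpotent groups, while your argument needs nothing outside the group at hand. One simplifying remark: in this 3-step situation your ``geometrically converging iteration'' is more than is needed, since a two-stage correction terminates exactly --- after fixing the $[X,Y]$-coordinate by a commutator $X^{a}Y^{b}X^{-a}Y^{-b}$ (whose error lies purely in degree~3 and has size $O(|\alpha_n|^{3/2})$), the remaining element is central and is produced exactly by triple commutators, because degree~3 is the top degree. With that observation your sketch becomes a complete and fully elementary proof of the continuity input that the paper cites.
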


\begin{proof}
  It is clear that $\ell(g_0) \ge 2$, since for the element~$g \in G$ corresponding to
  $(s,t,u,v,w)$, we always have $\ell(g) \ge |s|+|t|$. We claim that
  $\ell(g_0) = 2$. This follows from the facts that $\ell$ is continuous
  on~$G$, that $\ell$ takes the value~2 on~$\Sigma'$ and that
  $g_0$ is in the closure of~$\Sigma'$.
  Now, since $g_0$ is not in~$\Sigma'$, by definition of~$\Sigma'$ there is no
  $\R$-word~$w$
  of length~2 that represents~$g_0$, hence the set in the definition of~$\ell'(g_0)$
  is empty.
\end{proof}

\begin{Corollary}
  For every $N$, there exists an $\R$-word $w \in \Sigma$ such that there is
  no $\R$-word $w'$
  satisfying $\ell(w') \le 2 = \ell(w)$, $|w'| = |w|$ and $\ell'(w') \le N$.
\end{Corollary}

\begin{proof}
  If $g \in \Sigma'$, then any $\R$-word representing~$g$ must have length~$\ge 2$,
  and any $\R$-word of length~2 representing~$g$ must be in~$\Sigma$. The previous
  corollary implies that as we let $g$ approach~$g_0$ within~$\Sigma'$, the minimal
  coarse length of an $\R$-word in~$\Sigma$ representing~$g$ tends to infinity.
\end{proof}

%=============================================================================

\begin{bibdiv}
\begin{biblist}

\bib{Stoll1998b}{article}{
   author={Stoll, Michael},
   title={On the asymptotics of the growth of $2$-step nilpotent groups},
   journal={J. London Math. Soc. (2)},
   volume={58},
   date={1998},
   number={1},
   pages={38--48},
   issn={0024-6107},
   review={\MR{1666070}},
   doi={10.1112/S0024610798006371},
}

\end{biblist}
\end{bibdiv}

\end{document}